\newlength{\defbaselineskip}
\newcommand{\setlinespacing}[1]%
           {\setlength{\baselineskip}{#1 \defbaselineskip}}
\numberwithin{equation}{section}
\newtheorem{thm}{Theorem}[section]
\newtheorem{lem}[thm]{Lemma}
\theoremstyle{definition}
\theoremstyle{remark}
\newtheorem{rem}[thm]{Remark}
\numberwithin{equation}{section}
\begin{document}

\author{Ihyeok Seo}

\title[Unique continuation for Schr\"odinger operators]
{On unique continuation for Schr\"odinger operators of fractional and higher orders}

\address{School of Mathematics, Korea Institute for Advanced Study, Seoul 130-722, Republic of Korea}

\email{ihseo@kias.re.kr}


\subjclass[2000]{Primary 35B60, 35J10.}

\thanks{\textit{Key words and phrases.} Unique continuation, Schr\"odinger operators.}


\begin{abstract}
In this note we study the property of unique continuation for solutions
of $|(-\Delta)^{\alpha/2}u|\leq|Vu|$, where
$V$ is in a function class of potentials including $\bigcup_{p>n/\alpha}L^p(\mathbb{R}^n)$ for $n-1\leq\alpha<n$.
In particular, when $n=2$, our result gives a unique continuation theorem
for the fractional ($1<\alpha<2$) Schr\"odinger operator $(-\Delta)^{\alpha/2}+V(x)$ in the full range of $\alpha$ values.
\end{abstract}

\maketitle


\section{Introduction}

As it is well-known, analytic functions that are representable by power series
have the property of unique continuation.
This means that they cannot vanish in any non-empty open set without being identically zero.
Note that a solution $u$ of the Cauchy-Riemann operator $\overline{\partial}$ in $\mathbb{R}^2$
(i.e., $\overline{\partial}u=0$) has the property since it is complex analytic.
The same result holds for the Laplace operator $\Delta$ in $\mathbb{R}^n$
since its solutions are harmonic functions that are still real analytic.
So it would be desirable to obtain such property for partial differential operators
whose solutions are not necessarily analytic, or even smooth.

Historically, the first such result is due to Carleman ~\cite{C}, who showed the property
for the Schr\"odinger operator $-\Delta+V(x)$ in $\mathbb{R}^2$
if $V\in L_{\textrm{loc}}^\infty(\mathbb{R}^2)$.
This was extended to higher dimensions $n\geq3$ by M\"{u}ller~\cite{M}.
Since then, a great deal of work was devoted to the case $V\in L_{\textrm{loc}}^p(\mathbb{R}^n)$, $p<\infty$.
This is because the potentials $V$ that arise in quantum physics need not be locally bounded
and more importantly it can be applied to the problem of absence of positive eigenvalues
of the Schr\"odinger operator.
Among others, Jerison and Kenig~\cite{JK} proved
the unique continuation for more general differential inequalities of the form $|\Delta u|\leq|Vu|$
when $V\in L_{\textrm{loc}}^{n/2}(\mathbb{R}^n)$ if $n>2$,
and $V\in \bigcup_{p>1}L_{\textrm{loc}}^p(\mathbb{R}^2)$ if $n=2$.
This result later turns out to be optimal in the context of $L^p$ potentials $V$ (\cite{KN,KT}),
and was extended to the higher orders
$|\Delta^m u|\leq|Vu|$ ($m\in\mathbb{N}$) when $V\in L_{\textrm{loc}}^{n/2m}(\mathbb{R}^n)$ if $n>2m$ (\cite{L}),
and $V\in \bigcup_{p>1}L_{\textrm{loc}}^p(\mathbb{R}^n)$ if $n=2m$ (\cite{Se}).

In this note we are concerned with more general cases
of fractional ($0<\alpha<2$) and higher ($\alpha>2$) orders:
\begin{equation}\label{FH}
|(-\Delta)^{\alpha/2}u|\leq|Vu|,
\end{equation}
where $(-\Delta)^{\alpha/2}$ is defined by means of the Fourier transform
$\mathcal{F}f$ (=\,$\widehat{f}$\,):
$$\mathcal{F}[(-\Delta)^{\alpha/2}f](\xi)=|\xi|^\alpha\widehat{f}(\xi).$$
To the best of our knowledge, all the known results on unique continuation for~\eqref{FH}
deal only with the case of even integers $\alpha=2m$, $m\in\mathbb{N}$.
When it comes to the other cases of $\alpha$, the difficulty comes from the fact that
$(-\Delta)^{\alpha/2}$ is a nonlocal operator
which means that $(-\Delta)^{\alpha/2}f(x)$ depends not just on $f(y)$ for $y$ near $x$ but on $f(y)$ for all $y$.
Moreover, it does not satisfy Leibnitz's rule of differentiation, in general.
In order to get around these difficulties,
we consider a function class $\mathcal{K}_\alpha$, $0<\alpha<n$, of potentials $V$ defined by
\begin{equation}\label{poten}
V\in\mathcal{K}_\alpha\quad\Leftrightarrow\quad
\lim_{r\rightarrow0}\sup_{x\in\mathbb{R}^n}\int_{|x-y|<r}\frac{|V(y)|}{|x-y|^{n-\alpha}}dy=0.
\end{equation}
The case $\alpha=2$ is just the usual Kato class introduced by Kato~\cite{K}
to study the self-adjointness of the Schr\"odinger operator.
Since then, it has played an important role in the study of many other properties of the operator (cf. \cite{Si}).
In order to extend these studies to $(-\Delta)^{\alpha/2}+V(x)$,
the class ~\eqref{poten} was introduced and used by several authors.
(See ~\cite{ZY} and the references therein.)
By making use of ~\eqref{poten}, we obtain here unique continuation results
for ~\eqref{FH} with $n-1\leq\alpha<n$.

Before stating our result precisely,
it should be emphasized that there are physical interests in the case $1<\alpha<2$
as well as the case $\alpha=2$.
Recently, following the path integral approach (\cite{FH}) to quantum mechanics,
Laskin~\cite{La,La2,La3} generalized the Feynman path integral to the L\'{e}vy one.
This generalization leads to fractional quantum mechanics governed by the fractional Schr\"odinger equation
$i\partial_t\psi=((-\Delta)^{\alpha/2}+V(x))\psi$, where $1<\alpha<2$.
The usual quantum mechanics corresponds to the case $\alpha=2$.
In particular, when $n=2$, Theorem~\ref{thm} below gives a unique continuation result for
the fractional Schr\"odinger operator $(-\Delta)^{\alpha/2}+V(x)$
in the full range of $\alpha$ values.

\begin{thm}\label{thm}
Let $n\geq2$ and $n-1\leq\alpha<n$.
Assume that $u$ is a nonzero solution of~\eqref{FH} such that
\begin{equation}\label{sol}
u\in L^1(\mathbb{R}^n)\quad\text{and}\quad(-\Delta)^{\alpha/2}u\in L^1(\mathbb{R}^n).
\end{equation}
Then it cannot vanish in any non-empty open set of $\mathbb{R}^n$ if $V$ is in the class $\mathcal{K}_\alpha$.
\end{thm}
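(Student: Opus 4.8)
The plan is to convert the differential inequality \eqref{FH} into an integral identity and then exploit the real-analyticity of Riesz potentials off the support of their density. First I would use \eqref{sol} to write $u=I_{\alpha}((-\Delta)^{\alpha/2}u)$, where $I_{\alpha}$ is convolution with $c_{n,\alpha}|x|^{\alpha-n}$. On the Fourier side $\widehat u(\xi)=|\xi|^{-\alpha}\mathcal F[(-\Delta)^{\alpha/2}u](\xi)$ for $\xi\neq0$; since $\alpha<n$ makes $|\xi|^{-\alpha}$ locally integrable while $\mathcal F[(-\Delta)^{\alpha/2}u]$ is bounded, the difference $u-I_{\alpha}((-\Delta)^{\alpha/2}u)$ is a tempered distribution supported at the origin, hence a polynomial; this polynomial must vanish because $u\in L^{1}$ whereas $I_{\alpha}((-\Delta)^{\alpha/2}u)\in L^{n/(n-\alpha),\infty}$. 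Writing $f:=(-\Delta)^{\alpha/2}u\in L^{1}$, we thus have $u=I_{\alpha}f$.

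Now suppose $u$ vanishes on a nonempty open set but $u\not\equiv0$. After a translation $u=0$ a.e.\ on a ball centered at the origin; let $\rho_{0}\in(0,\infty)$ be the largest $\rho$ with $u=0$ a.e.\ on $B(0,\rho)$. By \eqref{FH}, $|f|\le|Vu|$, so $f=0$ a.e.\ on $B(0,\rho_{0})$ as well; hence $\operatorname{supp}f\subset\{|y|\ge\rho_{0}\}$ and, for $x\in B(0,\rho_{0})$,
\[
0=u(x)=c_{n,\alpha}\int_{|y|\ge\rho_{0}}\frac{f(y)}{|x-y|^{n-\alpha}}\,dy .
\]
On any ball $B(0,\rho')$ with $\rho'<\rho_{0}$ the $x$-derivatives of $|x-y|^{\alpha-n}$ are bounded uniformly in $|y|\ge\rho_{0}$ by $C^{|\gamma|}|\gamma|!\,(\rho_{0}-\rho')^{\alpha-n-|\gamma|}$, so since $f\in L^{1}$ one may differentiate under the integral sign: $u$ is real-analytic on $B(0,\rho_{0})$, whence $\partial^{\gamma}u(0)=0$ for all $\gamma$. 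Expanding $|x-y|^{\alpha-n}$ in solid harmonics in $x$ (the Gegenbauer generating function of parameter $(n-\alpha)/2$) and decomposing $f$ into spherical-harmonic components $\phi_{j}$ of degree $j$, these vanishing derivatives become, for each $j$, the moment conditions
\[
\int_{\rho_{0}}^{\infty}\phi_{j}(r)\,r^{\alpha-1-j-2\ell}\,dr=0,\qquad \ell=0,1,2,\dots,
\]
valid as long as the relevant Gegenbauer connection coefficients do not vanish — which is the case whenever $\alpha$ is not an even integer. The substitution $t=r^{-2}$ (the resulting integrand lying in $L^{1}(0,\rho_{0}^{-2}]$ thanks to $\alpha<n$ and $r\ge\rho_{0}>0$) turns this into the statement that an $L^{1}$ function on a bounded interval has all its moments equal to zero; by the Weierstrass theorem it vanishes, so $\phi_{j}\equiv0$ for every $j$. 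Hence $f\equiv0$ and $u=I_{\alpha}f\equiv0$, a contradiction.

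The main obstacle is the leftover case in which $\alpha=n-1$ is an even integer, which occurs exactly when $n$ is odd and $(-\Delta)^{\alpha/2}=(-\Delta)^{(n-1)/2}$ is a genuine differential operator: there the connection coefficients degenerate and the moment argument collapses. In fact, for such $\alpha$ there are nonzero $L^{1}$ densities supported outside a ball whose Riesz potential vanishes inside it (classical multipole configurations), so the conclusion truly fails without a hypothesis on $V$, and one must instead invoke the polyharmonic unique continuation theory (\cite{L,Se}), verifying that it accommodates potentials in the Kato-type class \eqref{poten}; this is the one place where $V\in\mathcal K_{\alpha}$ is indispensable. (The analyticity argument above uses only the global bounds \eqref{sol} and is insensitive to the lower bound $\alpha\ge n-1$; should one prefer a single proof uniform over $n-1\le\alpha<n$, the natural alternative is a Carleman-type weighted inequality for $(-\Delta)^{\alpha/2}$, in which the nonlocality of the operator — the failure of cutoff/commutator techniques — would be the chief difficulty, and for which the restriction $n-1\le\alpha<n$ becomes meaningful.) The verification that the connection coefficients are nonzero for non-even $\alpha$, together with the integrability bookkeeping in $u=I_{\alpha}f$ and in the moment argument, is routine but should be done with care.
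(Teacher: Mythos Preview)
Your route is genuinely different from the paper's and, where it succeeds, actually proves more: you never invoke the Kato hypothesis $V\in\mathcal K_\alpha$, only the global representation $u=I_\alpha f$ with $f=(-\Delta)^{\alpha/2}u$ (a consequence of \eqref{sol} alone) together with the implication $u=0\Rightarrow f=0$ coming from \eqref{FH}. The paper instead subtracts from the Riesz representation the degree-$(N{-}1)$ Taylor polynomial of the kernel about the origin (Lemma~\ref{lemma}), applies the pointwise bound of Lemma~\ref{lem} --- valid precisely for $0<n-\alpha\le1$, which is the source of the restriction $n-1\le\alpha<n$ --- and then uses the $\mathcal K_\alpha$ condition to show that $\int_{|x|<r}|V(x)|\,|x|^{-N}|u(x)|\,dx$ is, for small enough $r$ and \emph{uniformly in $N$}, at most half of $\int|y|^{-N}|V(y)u(y)|\,dy$; letting $N\to\infty$ forces vanishing on a fixed ball. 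So the paper's proof is a weighted (Carleman-type) bootstrap in which the Kato smallness is essential and the range $n-1\le\alpha<n$ is forced by Lemma~\ref{lem}, whereas your moment argument needs no assumption on $V$ and in fact runs for every $\alpha\in(0,n)$ that is not an even integer.

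The genuine gap is exactly the case you flag: $n$ odd and $\alpha=n-1$. There the connection coefficients between $C_k^{(n-\alpha)/2}$ and the zonal harmonics $C_m^{(n-2)/2}$ carry the Pochhammer factor $(1-\alpha/2)_\ell$, which vanishes for every $\ell\ge(n-1)/2$, so each spherical-harmonic component $\phi_j$ satisfies only finitely many moment conditions and cannot be forced to vanish; as you note, multipole densities give explicit counterexamples to the implication ``$I_{n-1}f=0$ on a ball and $f=0$ on that ball $\Rightarrow f\equiv0$''. Your proposed repair --- citing \cite{L,Se} --- does not close the gap as stated: \cite{L} assumes $V\in L^{n/2m}_{\mathrm{loc}}$ when $n>2m$, and \cite{Se} assumes $V\in\bigcup_{p>1}L^p_{\mathrm{loc}}$ when $n=2m$ (hence for even $n$ only); the Kato-type class $\mathcal K_{n-1}$ is contained in neither, so those results do not apply to the potentials covered by Theorem~\ref{thm}. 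Consequently, for odd $n\ge3$ the endpoint $\alpha=n-1$ remains unproved in your scheme, while the paper's weighted-inequality argument handles it uniformly. It is precisely at this endpoint --- and only there, within the stated range --- that the hypothesis $V\in\mathcal K_\alpha$ is doing indispensable work.
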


\begin{rem}
It is currently conjectured\footnote{The conjecture was first formulated by B. Simon in his survey paper on Schr\"odinger semigroups~\cite{Si}.}
that ~\eqref{FH} for $\alpha=2$ has the unique continuation whenever $V\in\mathcal{K}_2$.
When $n=3$, it is known to be true (\cite{S}). In higher dimensions this conjecture is verified 
for $\Delta u=Vu$ with radial potentials (\cite{FGL}).
More generally, it is natural to ask whether the theorem can hold for $0<\alpha<n-1$.  
It should be also noted that the theorem can be applied to the stationary equation 
$$((-\Delta)^{\alpha/2}+V(x))u=Eu,$$
and the same result holds for $E\in\mathbb{C}$ by noting that $(-\Delta)^{\alpha/2}u=(E-V(x))u$ and 
the condition~\eqref{poten} is trivially satisfied for the constant $E$.
\end{rem}

Note that the class $\mathcal{K}_\alpha$ has the property that
$\bigcup_{p>n/\alpha}L^p(\mathbb{R}^n)\subset\mathcal{K}_\alpha\subset L_{\textrm{loc}}^1(\mathbb{R}^n)$.
In fact, if $V\in L^p(\mathbb{R}^n)$, we see that
\begin{equation}\label{pr1}
\sup_{x\in\mathbb{R}^n}\int_{|x-y|<r}|x-y|^{-(n-\alpha)}|V(y)|dy
\leq C\bigg(\int_{|y|<r}|y|^{-(n-\alpha)p^\prime}dy\bigg)^{1/p^\prime}
\end{equation}
by H\"{o}lder's inequality.
Using polar coordinates, if $p>n/\alpha$,
one can see that the right-hand side of ~\eqref{pr1} tends to $0$ as $r\rightarrow0$.
So it follows that $V\in\mathcal{K}_\alpha$.
On the other hand, if $V\in\mathcal{K}_\alpha$, there is $0<r_0<1$ so that
the left-hand side of ~\eqref{pr1} is less than $1$.
Hence we get
$$\sup_{x\in\mathbb{R}^n}\int_{|x-y|<r_0}|V(y)|dy\leq 1$$
since $|x|^{-(n-\alpha)}\geq1$ for $|x|<r_0$.
This implies that $V\in L_{\textrm{loc}}^1(\mathbb{R}^n)$.

\medskip

Throughout this paper, the letter $C$ stands for positive constants possibly different at each occurrence.


\section{Preliminary lemmas}

In this section we present some preliminary lemmas which will be used
for the proof of Theorem~\ref{thm}.

\begin{lem}\label{lemma}
Let $\phi_\alpha(y)=|y|^{-(n-\alpha)}$ for $0<\alpha<n$.
Then we have for $x\in\mathbb{R}^n$ and $N\geq1$,
\begin{equation}\label{7}
u(x)=C\int_{\mathbb{R}^n}
\big[\phi_\alpha(x-y)-\sum_{k=0}^{N-1}\frac{(x\cdot\nabla)^k}{k!}\phi_\alpha(-y)\big](-\Delta)^{\alpha/2}u(y)dy
\end{equation}
if $u$ satisfies~\eqref{sol} and has a compact support in $\mathbb{R}^n\setminus\{0\}$.
\end{lem}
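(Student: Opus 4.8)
The starting point is the classical fact that $\phi_\alpha(y)=c_{n,\alpha}^{-1}|y|^{-(n-\alpha)}$ is (up to a constant) the convolution kernel of the Riesz potential $(-\Delta)^{-\alpha/2}$; that is, $(-\Delta)^{-\alpha/2}f = c_{n,\alpha}\,\phi_\alpha * f$ for $0<\alpha<n$, at least for Schwartz $f$ and then by density for suitable $L^1$ data. Since $u\in L^1$ and $g:=(-\Delta)^{\alpha/2}u\in L^1$ by~\eqref{sol}, I would first justify that $u = c_{n,\alpha}\,\phi_\alpha * g$ as tempered distributions: apply $(-\Delta)^{-\alpha/2}$ to both sides of the identity $(-\Delta)^{\alpha/2}u=g$ on the Fourier side, noting $|\xi|^\alpha\widehat u(\xi)=\widehat g(\xi)$ so $\widehat u(\xi)=|\xi|^{-\alpha}\widehat g(\xi)$, which is exactly the Fourier multiplier of $\phi_\alpha*g$ (the low-frequency singularity of $|\xi|^{-\alpha}$ is locally integrable precisely because $\alpha<n$). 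This gives the pointwise representation $u(x)=C\int_{\mathbb{R}^n}\phi_\alpha(x-y)\,g(y)\,dy$ for a.e.\ $x$. This is the $N=1$ case of~\eqref{7}, since the $k=0$ term of the sum is $\phi_\alpha(-y)$ and one must check it contributes nothing.

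The next step is to exploit the support hypothesis: $u$ has compact support in $\mathbb{R}^n\setminus\{0\}$, hence so does $g=(-\Delta)^{\alpha/2}u$? That is not automatic — $(-\Delta)^{\alpha/2}$ is nonlocal, so $g$ need not be compactly supported. However, what we actually need are the moment conditions $\int_{\mathbb{R}^n} y^\beta g(y)\,dy = 0$ for all multi-indices $\beta$ with $|\beta|\le N-1$. These follow from $\widehat u(\xi)=|\xi|^{-\alpha}\widehat g(\xi)$ together with $u$ being smooth near the origin in frequency: since $u$ is compactly supported in $\mathbb{R}^n\setminus\{0\}$ and integrable, $\widehat u$ is smooth, and the relation $\widehat g(\xi)=|\xi|^\alpha\widehat u(\xi)$ forces $\widehat g(\xi)$ to vanish to high order at $\xi=0$ — specifically $\widehat g(0)=0$ and, because the prefactor $|\xi|^\alpha$ with $\alpha\ge n-1\ge1$ kills derivatives, $\partial^\beta\widehat g(0)=0$ for $|\beta|$ below roughly $\alpha$. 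Since $\partial^\beta\widehat g(0)$ is a constant times $\int y^\beta g(y)\,dy$, this yields the vanishing moments. (I would phrase the needed range of $N$ carefully; the statement allows any $N\ge1$, so I must make sure the Taylor subtraction is legitimate for every $N$, which is really a statement about the integrability of the \emph{difference} kernel rather than about moments of $g$ — see the last step.)

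With the representation $u(x)=C\int\phi_\alpha(x-y)g(y)\,dy$ in hand, the identity~\eqref{7} is obtained by subtracting the degree-$(N-1)$ Taylor polynomial of the function $x\mapsto\phi_\alpha(x-y)$ expanded about $x=0$, namely $\sum_{k=0}^{N-1}\frac{(x\cdot\nabla)^k}{k!}\phi_\alpha(-y)$. The point is that each subtracted term, being a polynomial in $x$ times a function of $y$, integrates against $g(y)$ to give $C\sum_k \frac{1}{k!}\int (x\cdot\nabla)^k\phi_\alpha(-y)\,g(y)\,dy$; expanding $(x\cdot\nabla)^k$ produces monomials $y^\beta$ (after differentiating $\phi_\alpha$, which for $|y|$ away from $0$ is smooth) hitting the vanishing moments of $g$, so all these terms are zero and~\eqref{7} reduces to the already-established representation. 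The one genuine subtlety — and the step I expect to be the main obstacle — is convergence of the integral in~\eqref{7}: near $y=0$ the kernel $\phi_\alpha(x-y)-\sum_{k<N}\frac{(x\cdot\nabla)^k}{k!}\phi_\alpha(-y)$ does \emph{not} improve (it is $O(1)$ there, minus an unbounded polynomial-in-derivative tail), so one must use that $g(y)$ is controlled near $0$; and near $y=\infty$ the Taylor remainder of $\phi_\alpha(x-y)$ about $\phi_\alpha(-y)$ decays like $|y|^{-(n-\alpha)-N}$, which is integrable against $L^1$ data only once we also know $g$ decays — here again the support/integrability of $u$ and the smoothing must be combined. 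I would handle this by splitting $\mathbb{R}^n$ into $|y|\le 2|x|$, where I use the $N=1$ form directly (the subtracted polynomial terms all vanish against the moment conditions anyway, so no remainder estimate is needed), and $|y|>2|x|$, where Taylor's theorem with integral remainder gives the decay $|x|^N|y|^{-(n-\alpha)-N}$ and the remaining integral converges because $g\in L^1$ with the tail being a bounded multiple of $\|g\|_{L^1}$ times $|x|^N$. Assembling these two regimes gives~\eqref{7} for every $N\ge1$ and every $x$ with $u$ of compact support away from the origin.
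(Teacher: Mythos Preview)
Your overall architecture is right---establish the Riesz representation $u = C\,\phi_\alpha * g$ with $g=(-\Delta)^{\alpha/2}u$, then argue that subtracting the Taylor polynomial of $x\mapsto\phi_\alpha(x-y)$ at $x=0$ costs nothing---but the justification for the vanishing of the subtracted piece is wrong. Expanding $(x\cdot\nabla)^k\phi_\alpha(-y)$ gives terms of the form $x^\gamma D^\gamma\phi_\alpha(-y)$ with $|\gamma|=k$, and $D^\gamma\phi_\alpha(-y)$ is a homogeneous function of $y$ of degree $-(n-\alpha)-|\gamma|$, \emph{not} a monomial $y^\beta$; so the integrals that must vanish are $\int D^\gamma\phi_\alpha(-y)\,g(y)\,dy$, not moments $\int y^\beta g(y)\,dy$ of $g$. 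The correct and much simpler observation---and this is exactly what the paper does---is that by the Riesz representation itself, differentiated under the integral, $C\int D^\gamma\phi_\alpha(-y)\,g(y)\,dy = D^\gamma u(0)$, and the right-hand side vanishes for \emph{every} multi-index $\gamma$ simply because $u$ is identically zero in a neighborhood of the origin. In other words, the entire subtracted sum is precisely the $(N-1)$th Taylor polynomial of $u$ at $0$, which is identically zero. This handles all $N\ge1$ at once; by contrast your Fourier argument $\widehat g(\xi)=|\xi|^\alpha\widehat u(\xi)$ would---even if moments were the relevant object---only force $\partial^\beta\widehat g(0)=0$ for $|\beta|<\alpha<n$, hence only finitely many vanishing moments, which cannot cover arbitrary $N$.

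As for the convergence issues you raise: the paper disposes of them by first reducing to $u\in C_0^\infty(\mathbb{R}^n\setminus\{0\})$ via an approximate identity and then passing to the limit. For such $u$, $\widehat u$ is Schwartz and $g$ is smooth with enough decay at infinity that each integral $\int D^\gamma\phi_\alpha(-y)\,g(y)\,dy$ converges absolutely and differentiation under the integral is legitimate. The near/far splitting in $y$ and the Taylor-remainder bound $(|x|/|y|)^N\phi_\alpha(x-y)$ that you outline are not needed for this lemma; that estimate is exactly the content of the paper's next lemma and enters only later, in the proof of the main theorem.
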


\begin{proof}
It is enough to show that ~\eqref{7} holds for $u\in C_0^\infty(\mathbb{R}^n\setminus\{0\})$.
The remaining follows from this and a standard limiting argument involving a $C_0^\infty$ approximate identity.

First we claim that
\begin{equation}\label{1}
u(x)=C\int_{\mathbb{R}^n}\phi_\alpha(x-y)(-\Delta)^{\alpha/2}u(y)dy.
\end{equation}
Indeed, using the well-known fact (cf.~\cite{W}, p.23) that
$\widehat{|x|^{-\alpha}}(\xi)=C|\xi|^{-(n-\alpha)}$ in the sense of distributional Fourier transforms,
we see that
\begin{align*}
u(x)=\int_{\mathbb{R}^n}e^{2\pi ix\cdot\xi}\widehat{u}(\xi)d\xi
&=\int_{\mathbb{R}^n}|\xi|^{-\alpha}\mathcal{F}\big[(-\Delta)^{\alpha/2}u(\cdot+x)\big](\xi)d\xi\\
&=C\int_{\mathbb{R}^n}|y|^{-(n-\alpha)}(-\Delta)^{\alpha/2}u(y+x)dy,
\end{align*}
which gives ~\eqref{1}.

Now, we note that the $(N-1)^{th}$ degree Taylor polynomial of $u$ at $0$ must be zero,
since $u$ vanishes near $x=0$.
That is to say,
$$\sum_{|\beta|\leq N-1}\frac{D^\beta u(0)}{\beta!}x^\beta\equiv0,$$
where $\beta$ is the usual multiindex notation.
By ~\eqref{1}, this can be rewritten as
\begin{equation}\label{sdfs2}
C\int_{\mathbb{R}^n}\sum_{|\beta|\leq N-1}\frac{D^\beta \phi_\alpha(-y)}{\beta!}x^\beta(-\Delta)^{\alpha/2}u(y)dy
\equiv0.
\end{equation}
Then, we subtract ~\eqref{sdfs2} from both sides of ~\eqref{1} to conclude
\begin{equation*}
u(x)=C\int_{\mathbb{R}^n}\big[\phi_\alpha(x-y)-\sum_{|\beta|\leq N-1}\frac{D^\beta \phi_\alpha(-y)}{\beta!}x^\beta\big](-\Delta)^{\alpha/2}u(y)dy
\end{equation*}
which is same as ~\eqref{7}.
\end{proof}

Nextly, we recall from ~\cite{S} the following estimate on Taylor polynomial approximations to
$|x|^{-\beta}$, $\beta>0$.

\begin{lem}\label{lem}
Let $\psi_\beta(x)=|x|^{-\beta}$ for $0<\beta\leq1$.
Then one has
\begin{equation}\label{Taylor}
\Big|\psi_\beta(x-y)-\sum_{k=0}^{N-1}\frac{(x\cdot\nabla)^k}{k!}\psi_\beta(-y)\Big|\leq
C\Big(\frac{|x|}{|y|}\Big)^N\psi_\beta(x-y)
\end{equation}
for $x,y\in\mathbb{R}^n$ and $N\geq1$.
Moreover, this estimate is not valid if $\beta>1$.
\end{lem}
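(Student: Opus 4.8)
The plan is to exploit the homogeneity of $\psi_\beta$ and to read the bracketed quantity in~\eqref{Taylor} as a Taylor remainder. Since $\psi_\beta$ is homogeneous of degree $-\beta$, both sides of~\eqref{Taylor} scale like $|y|^{-\beta}$ while $|x|/|y|$ is scale invariant; replacing $(x,y)$ by $(x/|y|,\,y/|y|)$ we may therefore assume $|y|=1$ (and $y\neq0$, $x\neq y$, the estimate being otherwise trivial). Put $\Phi(t)=\psi_\beta(tx-y)=|tx-y|^{-\beta}$, so that $\Phi^{(k)}(0)=(x\cdot\nabla)^k\psi_\beta(-y)$ and $\Phi(1)=\psi_\beta(x-y)$; the left side of~\eqref{Taylor} is then precisely $\bigl|\Phi(1)-\sum_{k=0}^{N-1}\Phi^{(k)}(0)/k!\bigr|$, the order-$N$ Taylor remainder of $\Phi$ at $t=1$. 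The single pointwise ingredient I would record first is the homogeneity bound $\bigl|(x\cdot\nabla)^N|w|^{-\beta}\bigr|\le C_N|x|^N|w|^{-\beta-N}$ for $w\neq0$, proved by induction: each application of $x\cdot\nabla$ lowers the degree of homogeneity in $w$ by one and brings out a factor $x\cdot w$, which is controlled by $|x|\,|w|$.

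Then I would split into two regimes. If $|x|\le 1/2$, then $|tx-y|\ge|y|-|x|\ge1/2$ for all $t\in[0,1]$, so $\Phi$ is smooth in a neighborhood of $[0,1]$; moreover $1/2\le|x-y|\le3/2$, whence $|tx-y|\ge|x-y|/3$ there. Using the integral form of the remainder, $\Phi(1)-\sum_{k=0}^{N-1}\Phi^{(k)}(0)/k!=\tfrac{1}{(N-1)!}\int_0^1(1-t)^{N-1}\Phi^{(N)}(t)\,dt$, together with the homogeneity bound with $w=tx-y$, one bounds this by $C|x|^N|x-y|^{-\beta-N}$; since $|x-y|^{-N}\le 2^N|y|^{-N}$ (recall $|y|=1$), this is at most $C(|x|/|y|)^N\psi_\beta(x-y)$, as wanted. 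If instead $|x|\ge1/2$, I would not invoke Taylor's theorem at all — indeed $\Phi$ need not be smooth on $[0,1]$, since $tx-y$ may vanish — but argue termwise: by the triangle inequality and the homogeneity bound with $w=-y$, the left side of~\eqref{Taylor} is at most $\psi_\beta(x-y)+\sum_{k=0}^{N-1}\tfrac{C_k}{k!}|x|^k$. Because $(|x|/|y|)^N\ge2^{-N}$, the first summand is already $\le2^N(|x|/|y|)^N\psi_\beta(x-y)$; for the polynomial part one notes $|x-y|\le|x|+|y|\le3|x|$, so $\psi_\beta(x-y)=|x-y|^{-\beta}\ge(3|x|)^{-\beta}$, and then uses $\beta\le1$ to absorb each power $|x|^k$, $k\le N-1$, into $|x|^N|x-y|^{-\beta}$, distinguishing the subcases $1/2\le|x|\le1$ and $|x|\ge1$.

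For the sharpness claim I would give an explicit counterexample when $\beta>1$. Take $y$ a unit vector and $x=Ry$ with $R>1$, so that $x\cdot\nabla=R\,(y\cdot\nabla)$ and a one-variable computation gives $(x\cdot\nabla)^k\psi_\beta(-y)=R^k\,\beta(\beta+1)\cdots(\beta+k-1)$. Hence the Taylor polynomial $\sum_{k=0}^{N-1}R^k\beta(\beta+1)\cdots(\beta+k-1)/k!$ is a polynomial in $R$ of exact degree $N-1$ with positive leading coefficient, while $\psi_\beta(x-y)=(R-1)^{-\beta}\to0$ as $R\to\infty$. Thus the left side of~\eqref{Taylor} is at least a positive constant times $R^{N-1}$ for $R$ large (for $N=1$ it tends to $1$), whereas the right side equals $CR^N(R-1)^{-\beta}$, which is at most a constant times $R^{N-\beta}$; their ratio is therefore at least a constant times $R^{\beta-1}$, which tends to $\infty$. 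No constant $C$ can absorb this, so~\eqref{Taylor} fails for $\beta>1$.

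I expect the main obstacle to be the regime $|x|\ge|y|/2$. There the Taylor remainder need not be small and there is no convergent expansion to lean on, so the argument must dominate the growth of the individual Taylor coefficients by $\psi_\beta(x-y)$ itself; it is exactly this comparison that forces $\beta\le1$, and its failure — seen by letting $|x|\to\infty$ along a ray through $y$ — is what the counterexample above turns into a quantitative obstruction.
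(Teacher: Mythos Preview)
Your argument is sound. The paper itself does not prove this lemma: it simply quotes the estimate from Sawyer~\cite{S} without reproducing the argument, so there is no in-paper proof to compare against. Your approach---normalizing to $|y|=1$ by homogeneity, recognizing the bracket as the Taylor remainder of $\Phi(t)=|tx-y|^{-\beta}$, then splitting into the regimes $|x|\le 1/2$ (integral remainder plus the derivative bound $|(x\cdot\nabla)^N|w|^{-\beta}|\le C_N|x|^N|w|^{-\beta-N}$) and $|x|\ge1/2$ (termwise triangle inequality)---is a clean, self-contained proof. The place where $\beta\le1$ enters is exactly where you flagged it: in the large-$|x|$ regime one needs $|x|^{k}\le C|x|^{N}|x-y|^{-\beta}$ for every $k\le N-1$, and since $|x-y|\le3|x|$ this reduces to $|x|^{k+\beta}\le C|x|^{N}$, forcing $(N-1)+\beta\le N$. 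Your counterexample along the ray $x=Ry$ is also correct; the one-variable computation gives $\Phi^{(k)}(0)=R^{k}\beta(\beta+1)\cdots(\beta+k-1)$, so for $N\ge2$ the left side grows like $R^{N-1}$ while the right side is $\sim R^{N-\beta}$, and for $N=1$ the left side tends to $1$ while the right side tends to $0$; in either case the ratio blows up like $R^{\beta-1}$ when $\beta>1$.
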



\section{Proof of Theorem~\ref{thm}}\label{Proof}

Without loss of generality, we need to show that $u$ must be identically zero
if $u$ vanishes in a sufficiently small neighborhood of $0\in\mathbb{R}^n$.

By using ~\eqref{FH}, ~\eqref{7} and ~\eqref{Taylor} with $\beta=n-\alpha$, if $n-1\leq\alpha<n$,
we see that
\begin{equation}\label{121}
|u(x)|\leq C\int_{\mathbb{R}^n}\big(\frac{|x|}{|y|}\big)^N\phi_\alpha(x-y)|V(y)u(y)|dy.
\end{equation}
Let $f(y)=|y|^{-N}|V(y)u(y)|$.
Since $u$ vanishes near the origin, it follows that
\begin{equation}\label{15789}
\|f\|_{L^1}=\int_{\mathbb{R}^n}|y|^{-N}|V(y)u(y)|dy<\infty.
\end{equation}
Here, we also used the fact (see Theorem 2.2 in~\cite{ZY}) that the condition ~\eqref{sol} implies
$Vu\in L^1$ if $V\in \mathcal{K}_\alpha$.
Hence, from~\eqref{121} we get
\begin{align}\label{lme}
\int_{|x|<r}|V(x)||x|^{-N}|u(x)|dx
\nonumber&\leq C\int_{\mathbb{R}^n}\bigg(\int_{|x|<r}\phi_\alpha(x-y)|V(x)|dx\bigg)f(y)dy\\
&\leq C\bigg(\sup_{y\in\mathbb{R}^n}\int_{|x|<r}\phi_\alpha(x-y)|V(x)|dx\bigg)\|f\|_{L^1}.
\end{align}

Now, we set
$$\eta(r)=\sup_{y\in\mathbb{R}^n}\int_{|x|<r}\phi_\alpha(x-y)|V(x)|dx.$$
Then the condition~\eqref{poten} implies $\lim_{r\rightarrow0}\eta(r)=0$.
In fact, we note that
$$\sup_{|y|<2r}\int_{|x|<r}\phi_\alpha(x-y)|V(x)|dx
\leq\sup_{y\in\mathbb{R}^n}\int_{|x-y|<4r}\phi_\alpha(x-y)|V(x)|dx$$
and
\begin{align*}
\sup_{|y|\geq2r}\int_{|x|<r}\phi_\alpha(x-y)|V(x)|dx
&\leq Cr^{\alpha-n}\int_{|x|<r}|V(x)|dx\\
&\leq Cr^{\alpha-n}r^{n-\alpha}\int_{|x|<r}\frac{|V(x)|}{|x|^{n-\alpha}}dx.
\end{align*}
Then it follows from ~\eqref{poten} that
$$\lim_{r\rightarrow0}\eta(r)\leq C\lim_{r\rightarrow0}\sup_{y\in\mathbb{R}^n}\int_{|x-y|<4r}\frac{|V(x)|}{|x-y|^{n-\alpha}}dx=0.$$

Hence, if we choose $r_0>0$ small enough, we see from ~\eqref{lme} that
\begin{equation*}
\int_{|x|<r_0}|V(x)||x|^{-N}|u(x)|dx\leq\frac12\|f\|_{L^1}.
\end{equation*}
Combining this with ~\eqref{15789}, we get
\begin{equation*}
\int_{|x|<r_0}|V(x)||x|^{-N}|u(x)|dx\leq\int_{|y|\geq r_0}|y|^{-N}|V(y)u(y)|dy,
\end{equation*}
so that
\begin{equation}\label{dks2}
\int_{|x|<r_0}|V(x)|\big(\frac{r_0}{|x|}\big)^N|u(x)|dx\leq\|Vu\|_{L^1}<\infty.
\end{equation}
Here we may assume that $|V|\geq1$,
since $|V|+1$ also satisfies ~\eqref{FH} and $\lim_{r\rightarrow0}\eta(r)=0$.
Indeed, to show the second one for $|V|+1$, we only need to show
\begin{equation}\label{357}
\lim_{r\rightarrow0}\sup_{y\in\mathbb{R}^n}\int_{|x|<r}\phi_\alpha(x-y)dx=0.
\end{equation}
Since $\phi_\alpha(x)=|x|^{-(n-\alpha)}$, it follows that
\begin{align*}
\sup_{y\in\mathbb{R}^n}\int_{|x|<r}\phi_\alpha(x-y)dx
&\leq\sup_{|y|<2r}\int_{|x|<r}|x-y|^{-(n-\alpha)}dx
+\sup_{|y|\geq2r}\int_{|x|<r}r^{-(n-\alpha)}dx\\
&\leq\sup_{y\in\mathbb{R}^n}\int_{|x-y|<4r}|x-y|^{-(n-\alpha)}dx+Cr^\alpha\\
&\leq Cr^\alpha.
\end{align*}
This gives ~\eqref{357}.

Therefore, from ~\eqref{dks2} we see that
$$\int_{|x|<\frac{r_0}2}2^N|u(x)|dx\leq
\int_{|x|<r_0}|V(x)|\big(\frac{r_0}{|x|}\big)^N|u(x)|dx<\infty.$$
By letting $N\rightarrow\infty$,
we get that $u$ vanishes in $\{|x|<\frac{r_0}2\}$.
Then, using a standard connectedness argument, we can conclude that $u$ must be identically zero in $\mathbb{R}^n$.
This completes the proof.


\bibliographystyle{plain}


\end{document}